\title{Resolvent estimates for elliptic quadratic differential operators}
\author{Michael Hitrik\\Department of Mathematics \\University of California \\ Los Angeles
\\CA 90095-1555, USA\\\small hitrik@math.ucla.edu \and
Johannes Sj\"ostrand\\IMB, Universit\'e de Bourgogne\\9, Av. A. Savary, BP 47870\\FR--21078 Dijon, France \\and UMR 5584 CNRS
\\\small johannes.sjostrand@u-bourgogne.fr \and Joe Viola \\ Mathematical Sciences \\  Lund University
\\Box 118 \\ S-221 00 Lund, Sweden \\\small jviola@maths.lth.se}
\date{}
\def\wrtext#1{\relax\ifmmode{\leavevmode\hbox{#1}}\else{#1}\fi}
\def\abs#1{\left|#1\right|}
\def\begeq{\begin{equation}}
\def\endeq{\end{equation}}
\def\neigh{neighborhood}
\def\Re{{\rm Re\,}}
\def\Im{{\rm Im\,}}
\def\part#1{\frac{\partial}{\partial #1}}
\def\norm#1{||\,#1\,||}
\newcommand{\real}{\mbox{\bf R}}
\newcommand{\comp}{\mbox{\bf C}}
\newcommand{\nat}{\mbox{\bf N}}
\renewcommand{\Re}{\mbox{\rm Re\,}}
\renewcommand{\Im}{\mbox{\rm Im\,}}
\renewcommand{\exp}{\mbox{\rm exp\,}}
\newtheorem{dref}{Definition}[section]
\newtheorem{lemma}[dref]{Lemma}
\newtheorem{theo}[dref]{Theorem}
\newtheorem{prop}[dref]{Proposition}
\newenvironment{proof}{\vspace{.3cm}\noindent{{\em Proof:}}}{\hfill$\Box$}
\begin{document}
\maketitle

\vspace*{1cm}
\noindent
{\bf Abstract}: Sharp resolvent bounds for non-selfadjoint semiclassical ellip\-tic quad\-ratic diffe\-ren\-tial ope\-rators are
estab\-lished, in the interior of the range of the associated quadratic symbol.

\vskip 2.5mm
\noindent {\bf Keywords and Phrases:} Non-selfadjoint operator, resolvent estimate, spectrum, quadratic differential operator, FBI--Bargmann transform

\vskip 2mm
\noindent
{\bf Mathematics Subject Classification 2000}: 47A10, 35P05, 15A63, 53D22

\tableofcontents
\section{Introduction and statement of result}
\setcounter{equation}{0}
It is well known that the spectrum of a non-selfadjoint operator does not control its resolvent, and that the latter may become very
large even far from the spectrum. Understanding the behavior of the norm of the resolvent of a given non-selfadjoint operator is therefore a
natural and basic problem, which has recently received a considerable attention, in particular, within the circle of questions around the notion
of the pseudospectrum,~\cite{EmTr}. Some general upper bounds on resolvents are provided by the abstract operator theory, and restricting
the attention to the setting of semiclassical pseudodifferential operators on $\real^n$, relevant for this note, let us recall a rough statement
of such bounds, following~\cite{DeSjZw},~\cite{Markus},~\cite{Viola}. Assume that $P=p^w(x,hD_x)$ is the semiclassical Weyl quantization on
$\real^n$ of a complex-valued smooth symbol $p$ belonging to a suitable symbol class and satisfying an ellipticity condition at infinity,
guaranteeing that the spectrum of $P$ is discrete in a small \neigh{} of the origin. Then the norm of the $L^2$--resolvent of $P$ is
bounded from above by a quantity of the form ${\cal O}(1) \exp\left({\cal O}(1) h^{-n}\right)$, provided that $z\in {\rm neigh}(0,\comp)$ is
not too close to the spectrum of $P$. On the other hand, the available lower bounds on the resolvent of $P$, coming from the
pseudospectral considerations, are typically of the form $C_N^{-1}h^{-N}$, $N\in \nat$, or $(1/C)e^{1/(Ch)}$, provided that
$p$ enjoys some analyticity properties,~\cite{DeSjZw}. There appears to be therefore a substantial gap between the available
upper and lower bounds on the resolvent, especially when $n\geq 2$. The purpose of this note is to address the issue of
bridging this gap in the particular case of an elliptic quadratic semiclassical differential operator on $\real^n$, and to
establish a sharp upper bound on the norm of its resolvent.

\bigskip
\noindent
Let $q$ be a complex-valued quadratic form,
\begeq
\label{eq0.1}
q: \real^n_x\times \real^n_{\xi}\rightarrow \comp,\quad (x,\xi)\mapsto q(x,\xi).
\endeq
We shall assume throughout the following discussion that the quadratic form $q$ is elliptic on $\real^{2n}$, in the sense that $q(X)=0$,
$X\in \real^{2n}$, precisely when $X=0$. In this case, according to Lemma 3.1 of~\cite{Sjostrand74}, if $n>1$, then there exists
$\lambda\in \comp$, $\lambda\neq 0$, such that $\Re (\lambda q)$ is positive definite. In the case when $n=1$, the same conclusion
holds, provided that the range of $q$ on $\real^2$ is not all of $\comp$,~\cite{Sjostrand74},~\cite{Hi2004}, which is going to be
assumed in what follows. After a multiplication of $q$ by $\lambda$, we may and will assume henceforth that $\lambda=1$, so that
\begeq
\label{eq0.2}
\Re q>0.
\endeq
It follows that the range $\Sigma(q) = q(\real^{2n})$ of $q$ on $\real^{2n}$ is a closed angular sector with a vertex at zero,
contained in the union of $\{0\}$ and the open right half-plane.

\medskip
\noindent
Associated to the quadratic form $q$ is the semiclassical Weyl quantization $q^w(x,hD_x)$, $0<h\leq 1$, which we shall view as a
closed densely defined operator on $L^2(\real^n)$, equipped with the domain $\{u\in L^2(\real^n); q^w(x,hD_x)u\in L^2(\real^n)\}$.
The spectrum of $q^w(x,hD_x)$ is discrete, and following~\cite{Sjostrand74}, we shall now recall its explicit description.
To that end, let us introduce the Hamilton map $F$ of $q$,
$$
F: \comp^{2n}\rightarrow \comp^{2n},
$$
defined by the identity,
\begeq
\label{eq0.3}
q(X,Y) = \sigma(X,FY),\quad X,Y\in \comp^{2n}.
\endeq
Here the left hand side is the polarization of $q$, viewed as a symmetric bilinear form on $\comp^{2n}$, and $\sigma$ is the complex symplectic
form on $\comp^{2n}$. We notice that the Hamilton map $F$ is skew-symmetric with respect to $\sigma$, and furthermore,
\begeq
\label{eq0.31}
F Y = \frac{1}{2} H_q (Y),
\endeq
where $H_q = q'_{\xi}\cdot \partial_x - q'_x\cdot \partial_{\xi}$ is the Hamilton field of $q$.

\medskip
\noindent
The ellipticity condition (\ref{eq0.2}) implies that the spectrum of the Hamilton map $F$ avoids the real axis, and in general we know from
Section 21.5 of~\cite{H_book} that if $\lambda$ is an eigenvalue of $F$, then so is $-\lambda$, and the algebraic multiplicities agree.
Let $\lambda_1,\ldots, \lambda_n$ be the eigenvalues of $F$, counted according to their multiplicity, such that $\lambda_j/i\in \Sigma(q)$,
$j=1,\ldots, n$. Then the spectrum of the operator $q^w(x,hD_x)$ is given by the eigenvalues of the form
\begeq
\label{eq0.4}
h \sum_{j=1}^n \frac{\lambda_j}{i} \left(2\nu_{j,\ell}+1\right),\quad \nu_{j,\ell}\in \nat\cup\{0\}.
\endeq
We notice that ${\rm Spec}(q^w(x,hD_x))\subset \Sigma(q)$, and from~\cite{Pravda_XEDP} we also know that
$$
{\rm Spec}(q^w(x,hD_x))\cap \partial \Sigma(q)=\emptyset,
$$
provided that the operator $q^w(x,hD_x)$ is not normal.

\medskip
\noindent
The following is the main result of this work.

\begin{theo}
Let $q: \real^n_x\times \real^n_{\xi}\rightarrow \comp$ be a quadratic form such that $\Re q$ is positive definite.
Let $\Omega \subset \subset \comp$. There exists $h_0>0$ and for every $C>0$ there exists $A>0$ such that
\begeq
\label{est1}
\norm{\left(q^w(x,hD_x)-z\right)^{-1}}_{{\cal L}(L^2({\bf R}^n), L^2({\bf R}^n))} \leq A\, \exp\left(A h^{-1}\right),
\endeq
for all $h\in (0,h_0]$, and all $z\in \Omega$, with ${\rm dist}\, (z,{\rm Spec}(q^w(x,hD_x)))\geq 1/C$. Furthermore, for all $C>0$, $L\geq 1$, there
exists $A>0$ such that for $h\in (0,h_0]$, we have
\begeq
\label{est2}
\norm{\left(q^w(x,hD_x)-z\right)^{-1}}_{{\cal L}(L^2({\bf R}^n), L^2({\bf R}^n))} \leq A\, \exp\left(A h^{-1}\log \frac{1}{h}\right),
\endeq
if the spectral parameter $z\in \Omega$ is such that
$$
{\rm dist}\, (z,{\rm Spec}(q^w(x,hD_x)))\geq h^L/C.
$$
\end{theo}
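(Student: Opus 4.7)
The plan is to conjugate $q^w(x,hD_x)$ to a normal form via a metaplectic FBI--Bargmann \tf, and to analyze the resolvent on a weighted Bargmann space of \hol{} functions where the spectrum (\ref{eq0.4}) is manifest. A semiclassical unitary dilation $x \mapsto h^{1/2}x$ identifies $q^w(x,hD_x)$ with $h\,q^w(x,D_x)$, so it suffices to bound the resolvent of the $h$-independent operator $q^w(x,D_x)$ at the rescaled parameter $w = z/h$, which lies in a disc of radius $O(1/h)$ and satisfies $\dist(w, \Spec(q^w(x,D_x))) \geq 1/(Ch)$ in the setting of (\ref{est1}), or $\dist(w, \Spec(q^w(x,D_x))) \geq h^{L-1}/C$ in the setting of (\ref{est2}).

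By a complex linear symplectic transformation $\kappa$ of $\comp^{2n}$ one brings the Hamilton map $F$ of $q$ to Jordan normal form, cf.~\cite{Sjostrand74}. This $\kappa$ is implemented by a metaplectic FBI--Bargmann operator $U: L^2(\real^n) \to H_\Phi(\comp^n)$, unitary onto a Bargmann space of entire functions with a quadratic \stpsh{} weight $\Phi$ associated to a positive Lagrangian of $\kappa$; the ellipticity assumption $\Re q > 0$ ensures the existence of such a $\Phi$. Exact Egorov for quadratic symbols then yields
\begeq
\label{eq:normalformplan}
\widetilde Q := U\, q^w(x,D_x)\, U^{-1} = \sum_{j=1}^n \mu_j \bigl(z_j \partial_{z_j} + \half\bigr) + N^w, \qquad \mu_j = \lambda_j/i,
\endeq
acting on $H_\Phi$, with $N$ a strictly triangular nilpotent quadratic form that vanishes when $F$ is semisimple. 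The monomials $z^\alpha$ form a complete system of generalized eigenfunctions of $\widetilde Q$, with associated eigenvalues $\mu_\alpha = \sum_j \mu_j(2\alpha_j + 1)$, matching (\ref{eq0.4}) at $h = 1$.

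One then expands the resolvent of $\widetilde Q$ as a spectral sum
$$
(\widetilde Q - w)^{-1} = \sum_\alpha \sum_{k=0}^{K_\alpha} \frac{N_\alpha^k\, \Pi_\alpha}{(\mu_\alpha - w)^{k+1}},
$$
where $\Pi_\alpha$ is the oblique projection onto the generalized eigenspace for $\mu_\alpha$, $N_\alpha$ is the induced nilpotent on that space, and $K_\alpha$ is bounded by the largest Jordan block size of $F$. A direct Gaussian computation in the Bargmann representation yields a bound of the form $\norm{\Pi_\alpha}_{H_\Phi \to H_\Phi} \leq C(1+|\alpha|)^{k_0}\, e^{c_0|\alpha|}$, the exponential factor reflecting the non-orthogonality of the monomial eigenbasis in $H_\Phi$. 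For $w$ in a disc of radius $O(1/h)$, only eigenvalues with $|\mu_\alpha| \lesssim |w| = O(1/h)$ contribute significantly, so the relevant $|\alpha|$ are $O(1/h)$; summing the series and invoking the distance hypothesis yields $\norm{(\widetilde Q - w)^{-1}}_{H_\Phi \to H_\Phi} \leq A\, e^{A/h}/\dist(w, \Spec(\widetilde Q))$. Passing back via $z = hw$ and using the unitarity of $U$ gives (\ref{est1}).

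For (\ref{est2}), the smaller distance hypothesis introduces a factor $h^{-L}$ in the naive bound, which must be absorbed into $\exp(A h^{-1} \log(1/h))$. The improvement comes from a refined analysis showing that the number of eigenvalues $\mu_\alpha$ clustering within $h^{L-1}$ of any given $w$ is at most polynomial in $1/h$, combined with a careful grouping of the Jordan block contributions inside each such cluster. The principal obstacle is the precise control of $\norm{\Pi_\alpha}_{H_\Phi \to H_\Phi}$ and of the nilpotent contributions in the presence of Jordan blocks and near-resonances among the $\mu_j$, which requires an optimal choice of the Bargmann weight $\Phi$ so that the exponent $c_0$ is adapted to the lattice structure of (\ref{eq0.4}).
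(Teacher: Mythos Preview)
Your reduction to a normal form on a Bargmann space is the same as the paper's, and the rescaling $q^w(x,hD_x)=h\,q^w(x,D_x)$ is a perfectly good way to think about the $h$--dependence. The analysis on the transformed side, however, has two genuine gaps.

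\medskip
\noindent
\textbf{The spectral sum does not converge.} Your expansion
\[
(\widetilde Q - w)^{-1}=\sum_\alpha \sum_{k=0}^{K_\alpha}\frac{N_\alpha^k\,\Pi_\alpha}{(\mu_\alpha-w)^{k+1}}
\]
cannot be summed in operator norm: you yourself note that $\norm{\Pi_\alpha}\lesssim e^{c_0|\alpha|}$, while for large $|\alpha|$ one only has $|\mu_\alpha-w|\sim|\alpha|$, so the tail behaves like $\sum_\alpha e^{c_0|\alpha|}/|\alpha|$, which diverges. The hand--wave ``only $|\alpha|=O(1/h)$ contribute significantly'' is therefore unjustified, and indeed false for this series. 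The paper deals with the large--$|\alpha|$ regime by an entirely different mechanism: an elliptic \emph{a priori} estimate (Proposition~3.2), based on the observation that a holomorphic function vanishing to order $N_0\sim h^{-1}$ at the origin is exponentially small on any fixed ball, combined with the quantization--multiplication formula and the ellipticity $\Re\widetilde q(x,\tfrac{2}{i}\partial_x\Phi_1)\sim|x|^2$. This replaces your divergent tail by the single bound $\norm{(1-\tau_{N_0})u}\leq O(1)\norm{(\widetilde q^w-z)u}$, and reduces everything to a finite--dimensional problem on polynomials of degree $<N_0$.

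\medskip
\noindent
\textbf{The nilpotency order is not bounded by the Jordan size of $F$.} Your claim that $K_\alpha$ is bounded by the largest Jordan block of $F$ is incorrect and leads you to the too--strong conclusion $\norm{(\widetilde Q-w)^{-1}}\leq A e^{A/h}/\dist(w,\Spec)$, which would give (\ref{est1}) and in fact the improved bound (\ref{est3}) in all cases. But the paper's example in Section~4 (a single $2\times 2$ Jordan block for $F$) shows explicitly that when $\dist(z,\Spec)\sim h$ the resolvent can exceed $\exp\bigl(\tfrac{1}{2h}\log\tfrac{1}{h}\bigr)$, contradicting your bound. The point is that when several $\mu_j$ coincide, the generalized eigenspace of $\widetilde Q$ at the eigenvalue $\mu_\alpha$ is all of $E_m=\mathrm{span}\{x^\beta:|\beta|=m\}$ for the relevant $m$, and the nilpotent $\widetilde q_N^w$ restricted to $E_m$ has order $m(n-1)+1$ (Lemma~4.1), i.e.\ $O(m)=O(1/h)$, not $O(1)$. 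The resulting Neumann series then has $O(1/h)$ terms, which produces the factor $d(z,\sigma_m)^{-O(1/h)}$ in (\ref{eq4.6}); with $d\geq h^L/C$ this is exactly $\exp(O(h^{-1})\log(1/h))$ and is the true origin of the $\log(1/h)$ in (\ref{est2}). Your discussion of (\ref{est2}) (eigenvalue clustering, ``refined analysis'') misses this mechanism entirely.
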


\bigskip
\noindent
{\it Remark}. Assume that the elliptic quadratic form $q$, with $\Re q>0$, is such that the Poisson bracket $\{\Re q,\Im q\}$ does not
vanish identically, and let $z\in \Sigma(q)^{o}$, $z\notin {\rm Spec}(q^w(x,hD_x))$. Here $\Sigma(q)^o$ is the interior of $\Sigma(q)$.
Then it follows from the results of~\cite{DeSjZw} and~\cite{PravdaDuke} that we have the following lower bound for
$\left(q^w(x,hD_x)-z\right)^{-1}$, as $h\rightarrow 0$,
$$
\norm{\left(q^w(x,hD_x)-z\right)^{-1}}_{{\cal L}(L^2({\bf R}^n), L^2({\bf R}^n))} \geq \frac{1}{C_0} e^{1/(C_0h)}\,\quad C_0>0.
$$
It follows that the upper bound (\ref{est1}) is of the right order of magnitude, when $z\in \Sigma(q)^{o}\cap \Omega$, $\abs{z}\sim 1$,
avoids a closed cone $\subset \Sigma(q)\cup \{0\}$, containing the spectrum of $q^w(x,hD_x)$.

\bigskip
\noindent
{\it Remark}. In Section 4 below, we shall give a simple example of an elliptic quadratic operator on $\real^2$, for which the associated
Hamilton map has a non-vanishing  nilpotent part in its Jordan decomposition, and whose resolvent exhibits the superexponential growth given
by the right hand side of (\ref{est2}), in the region of the complex spectral plane where $\abs{z}\sim 1$,
${\rm dist}(z,{\rm Spec}(q^w(x,hD_x)))\sim h$. On the other hand, sharper resolvent estimates can be obtained when the Hamilton
map $F$ of $q$ is diagonalizable. In this case, in Section 4 we shall see that the bound (\ref{est2}) improves to the following, when
$z\in \Omega$ and $h\in (0,h_0]$,
\begeq
\label{est3}
\norm{\left(q^w(x,hD_x)-z\right)^{-1}}_{{\cal L}(L^2({\bf R}^n), L^2({\bf R}^n))} \leq \frac{A e^{A/h}}{{\rm dist}\,(z, {\rm Spec}(q^w(x,hD_x)))}.
\endeq

\bigskip
\noindent
{\it Remark}. Let $z_0\in {\rm Spec}(q^w(x,hD_x))\cap \Omega$ and let
$$
\Pi_{z_0} = \frac{1}{2\pi i}\int_{\partial D} \left(z-q^w(x,hD_x)\right)^{-1}\,dz
$$
be the spectral projection of $q^w(x,hD_x)$, associated to the eigenvalue $z_0$. Here $D \subset \Omega$ is a small open disc centered at $z_0$,
such that the closure $\overline{D}$ avoids the set ${\rm Spec}(q^w(x,hD_x))\backslash \{z_0\}$, and $\partial D$ is its positively oriented boundary.
Assume for simplicity that the quadratic form $q$ is such that its Hamilton map is diagonalizable. Then it follows from (\ref{est3}) that
$$
\Pi_{z_0}  = {\cal O}(1) \exp\left({\cal O}(1) h^{-1}\right): L^2 (\real^n)\rightarrow L^2(\real^n).
$$

\bigskip
\noindent
In the context of elliptic quadratic differential operators in dimension one, resolvent bounds have been studied, in particular,
in~\cite{Boulton},~\cite{Da2},~\cite{DaKu}. We should also mention the general resolvent estimates
of~\cite{DeSjZw},~\cite{Sj2010}, valid for $h$--pseudodifferential operators, when the spectral parameter is close to the boundary of
the range of the corresponding symbol.

\medskip
\noindent
The plan of this note is as follows. In Section 2, we make an essentially well-known reduction of our problem to the setting of a
quadratic differential operator, acting in a Bargmann space of holomorphic functions, convenient for the subsequent analysis.
Section 3 is devoted to suitable a priori elliptic estimates, valid for holomorphic functions vanishing to a high, $h$--dependent, order at
the origin. The proof of Theorem 1.1 is completed in Section 4 by some elementary considerations in the space of holomorphic polynomials
on $\comp^n$, of degree not exceeding ${\cal O}(h^{-1})$.

\bigskip
\noindent
{\bf Acknowledgements}. The second author has benefitted from support of the Agence Nationale de la Recherche under the references JC05-52556 and
ANR-08-BLAN-0228-01, as well as a grant FABER of the Conseil r\'egional de Bourgogne.

\section{The normal form reduction}
\setcounter{equation}{0}
We shall be concerned here with a quadratic form $q: T^*\real^n \rightarrow \comp$, such that $\Re q$ is positive definite. Let
$F$ be the Hamilton map of $q$, introduced in (\ref{eq0.3}). When $\lambda \in {\rm Spec}(F)$, we let
\begeq
\label{eq4}
V_{\lambda} = {\rm Ker}((F-\lambda)^{2n})\subset T^*\comp^{n}
\endeq
be the generalized eigenspace belonging to the eigenvalue $\lambda$. The symplectic form $\sigma$ is then non-degenerate viewed as a bilinear form
on $V_{\lambda}\times V_{-\lambda}$.

\medskip
\noindent
Let us introduce the stable outgoing manifold for the Hamilton flow of the quadratic form $i^{-1}q$, given by
\begeq
\label{eq5}
\Lambda^+  := \bigoplus_{{\rm Im}\, \lambda>0} V_{\lambda}\subset T^*\comp^n.
\endeq
It is then true that $\Lambda^+$ is a complex Lagrangian plane such that $q$ vanishes along $\Lambda^+$, and Proposition 3.3 of~\cite{Sjostrand74}
states that the complex Lagrangian $\Lambda^+$ is strictly positive in the sense that
\begeq
\label{eq6}
\frac{1}{i} \sigma(X,\overline{X})>0,\quad 0\neq X\in \Lambda^+.
\endeq
We also define
\begeq
\label{eq7}
\Lambda^-  = \bigoplus_{{\rm Im}\, \lambda<0} V_{\lambda}\subset T^*\comp^n,
\endeq
which is a complex Lagrangian plane such that $q$ vanishes along $\Lambda^-$, and from the arguments of~\cite{Sjostrand74} we also know that
$\Lambda^-$ is strictly negative in the sense that
\begeq
\label{eq8}
\frac{1}{i} \sigma(X,\overline{X})<0,\quad 0\neq X\in \Lambda^-.
\endeq

\bigskip
\noindent
The complex Lagrangians $\Lambda^+$ and $\Lambda^-$ are transversal, and following~\cite{HeSj84},~\cite{Sj86}, we would like to
implement a reduction of the quadratic form $q$ to a normal form by applying a linear complex canonical transformation which reduces $\Lambda^+$ to
$\{(x,\xi)\in \comp^{2n};\, \xi=0\}$ and $\Lambda^-$ to $\{(x,\xi)\in \comp^{2n};\, x=0\}$. We shall then be
able to implement the canonical transformation in question by an FBI--Bargmann transform. Let us first simplify $q$ by means of a suitable real
linear canonical transformation. When doing so, we observe that the fact that the Lagrangian $\Lambda^-$ is strictly
negative implies that it is of the form
$$
\eta = A_-y,\quad y\in \comp^n,
$$
where the complex symmetric $n\times n$ matrix $A_-$ is such that $\Im A_-<0$. Here $(y,\eta)$ are the standard canonical coordinates on
$T^*\real^n_y$, that we extend to the complexification $T^*\comp^n_y$. Using the real linear canonical transformation
$(y,\eta)\mapsto (y, \eta - (\Re A_-)y)$, we reduce $\Lambda^-$ to the form $\eta = i\Im A_-y$, and by a diagonalization of $\Im A_-$, we
obtain the standard form $\eta = -iy$. After this real
linear symplectic change of coordinates, and the conjugation of the semiclassical Weyl quantization $q^w(x,hD_x)$ of $q$ by means of the
corresponding unitary metaplectic operator, we may assume that $\Lambda^-$ is of the form
\begeq
\label{eq9}
\eta = -iy,\quad y\in \comp^n,
\endeq
while the positivity property of the complex Lagrangian $\Lambda^+$ is unaffected, so that, in the new real symplectic coordinates,
extended to the complexification, $\Lambda^+$ is of the form
\begeq
\label{eq10}
\eta = A_+ y,\quad \Im A_+>0.
\endeq

\bigskip
\noindent
Let
\begeq
\label{eq11}
B = B_+ = (1-iA_+)^{-1}A_+,
\endeq
and notice that the matrix $B$ is symmetric. Let us introduce the following FBI--Bargmann transform,
\begeq
\label{eq12}
Tu(x) = C h^{-3n/4} \int e^{i\varphi(x,y)/h} u(y)\,dy,\quad x\in \comp^n,\quad C>0,
\endeq
where
\begeq
\label{eq12.5}
\varphi(x,y)  = \frac{i}{2}(x-y)^2 - \frac{1}{2}(Bx,x).
\endeq
The associated complex linear canonical transformation on $\comp^{2n}$,
\begeq
\label{eq13}
\kappa_T: (y,-\varphi'_y(x,y))\mapsto (x,\varphi'_x(x,y))
\endeq
is of the form
\begeq
\label{eq14}
\kappa_T: (y,\eta)\mapsto (x,\xi)=(y-i\eta, \eta+iB\eta-By),
\endeq
and we see that the image of $\Lambda_-:\eta=-iy$ under $\kappa_T$ is the fiber $\{(x,\xi)\in \comp^{2n};\, x=0\}$, while
$\kappa_T(\Lambda^+)$ is given by the equation $\{(x,\xi)\in \comp^{2n}; \xi=0\}$.

\medskip
\noindent
We know from~\cite{Sj95} that for a suitable choice of $C>0$ in (\ref{eq12}), the map $T$ is unitary,
\begeq
\label{eq15}
T: L^2(\real^n)\rightarrow H_{\Phi_0}(\comp^n),
\endeq
where
$$
H_{\Phi_0}(\comp^n) = {\rm Hol}\,(\comp^n)\cap L^2(\comp^n; e^{-2\Phi_0/h}L(dx)),
$$
and $\Phi_0$ is a strictly plurisubharmonic quadratic form on $\comp^n$, given by
\begeq
\label{eq16}
\Phi_0(x) = {\rm sup}\,_{y\in {\bf R}^n} \left(-\Im \varphi(x,y)\right) = \frac{1}{2} \left(\left(\Im x\right)^2 + \Im (Bx,x)\right).
\endeq
From~\cite{Sj95}, we recall also that the canonical transformation $\kappa_T$ in (\ref{eq13}) maps $\real^{2n}$ bijectively onto
\begeq
\label{eq17}
\Lambda_{\Phi_0}:=\left\{\left(x,\frac{2}{i}\frac{\partial \Phi_0}{\partial x}(x)\right); x\in \comp^n\right\}.
\endeq
As explained in Chapter 11 of~\cite{Sj82}, the strict positivity of $\kappa_T(\Lambda^+)=\{(x,\xi)\in \comp^{2n}; \xi=0\}$ with
respect to $\Lambda_{\Phi_0}$ implies that the quadratic weight function $\Phi_0$ is strictly convex, so that
\begeq
\label{eq18}
\Phi_0(x) \sim \abs{x}^2,\quad x\in \comp^n.
\endeq

\medskip
\noindent
We have next the exact Egorov property,~\cite{Sj95},
\begeq
\label{eq19}
T q^w(y,hD_y)u = \widetilde{q}^w(x,hD_x) Tu,\quad u\in {\cal S}(\real^n),
\endeq
where $\widetilde{q}$ is a quadratic form on $\comp^{2n}$ given by $\widetilde{q} = q\circ \kappa_T^{-1}$. It follows therefore that
\begeq
\label{eq20}
\widetilde{q}(x,\xi) = Mx \cdot \xi,
\endeq
where $M$ is a complex $n\times n$ matrix. We have
$$
H_{\widetilde{q}} = Mx \cdot \partial_x - M^{t}\xi\cdot \partial_{\xi},
$$
and using (\ref{eq0.31}), we conclude that with the agreement of algebraic multiplicities, the following holds,
\begeq
\label{eq21}
{\rm Spec}(M) = {\rm Spec}(2F)\cap \{\Im \lambda>0\}.
\endeq
The problem of estimating the norm of the resolvent of $q^w(x,hD_x)$ on $L^2(\real^n)$ is therefore equivalent to controlling the norm of the resolvent
of the quadratic operator $\widetilde{q}^w(x,hD_x)$, acting in the space $H_{\Phi_0}(\comp^n)$, where the quadratic weight $\Phi_0$ enjoys the property
(\ref{eq18}).

\medskip
\noindent
In what follows, it will be convenient to reduce the matrix $M$ in (\ref{eq20}) to its Jordan normal form. To this end, let us notice that we can
implement this reduction by considering a complex canonical transformation of the form
\begeq
\label{eq22}
\kappa_{C}: \comp^{2n}\ni (x,\xi)\mapsto (C^{-1}x, C^{t}\xi)\in \comp^{2n},
\endeq
where $C$ is a suitable invertible complex $n\times n$ matrix. On the operator level, associated to the transformation in (\ref{eq22}), we have the
operator $u(x)\mapsto \abs{{\rm det}\, C} u(Cx)$, which maps the space $H_{\Phi_0}(\comp^n)$ unitarily
onto the space $H_{\Phi_1}(\comp^n)$, where $\Phi_1(x)=\Phi_0(Cx)$ is a strictly plurisubharmonic quadratic weight such that
$\kappa_C(\Lambda_{\Phi_0}) = \Lambda_{\Phi_1}$. We notice that the property
\begeq
\label{eq23}
\Phi_1(x) \sim \abs{x}^2,\quad x\in \comp^n,
\endeq
remains valid.

\medskip
\noindent
We summarize the discussion pursued in this section, in the following result.
\begin{prop}
Let $q: \real^n_x\times \real^n_{\xi}\rightarrow \comp$ be a quadratic form, with $\Re q>0$. The operator
$$
q^w(x,hD_x): L^2(\real^n)\rightarrow L^2(\real^n),
$$
equipped with the domain
$$
{\cal D}(q^w(x,hD_x)) = \{u\in L^2(\real^n); \left(x^2+(hD_x)\right)^2 u\in L^2(\real^n)\},
$$
is unitarily equivalent to the quadratic operator,
$$
\widetilde{q}^w(x,hD_x): H_{\Phi_1}(\comp^n)\rightarrow H_{\Phi_1}(\comp^n),
$$
with the domain
$$
{\cal D}(\widetilde{q}^w(x,hD_x)) = \{u\in H_{\Phi_1}(\comp^n); (1+\abs{x}^2)u\in L^2_{\Phi_1}(\comp^n)\}.
$$
Here
$$
\widetilde{q}(x,\xi) = Mx\cdot \xi,
$$
where $M$ is a complex $n\times n$ block--diagonal matrix, each block being a Jordan one. Furthermore, the eigenvalues of $M$ are precisely those of
$2F$ in the upper half-plane, and the quadratic weight function $\Phi_1(x)$ satisfies,
$$
\Phi_1(x)\sim \abs{x}^2,\quad x\in \comp^n.
$$
We have the ellipticity property,
\begeq
\label{eq24}
\Re \widetilde{q}\left(x,\frac{2}{i}\frac{\partial \Phi_1}{\partial x}(x)\right)\sim \abs{x}^2,\quad x\in \comp^n.
\endeq
\end{prop}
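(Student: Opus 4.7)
The plan is to assemble the three successive conjugations already introduced in this section --- the real metaplectic normalization of $\Lambda^-$, the Bargmann transform $T$ of (\ref{eq12}), and the matrix dilation $\kappa_C$ of (\ref{eq22}) --- and then to verify each claim of the proposition in turn. All the ingredients have been set up in the preceding discussion, and the work consists in checking that their composition delivers the stated unitary equivalence, the Jordan form of $M$, the spectral identification, and the quantitative ellipticity (\ref{eq24}).

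First I would recall that after the initial real linear symplectic change of coordinates, quantized by a metaplectic operator that is unitary on $L^2(\real^n)$, we may assume $\Lambda^-=\{\eta=-iy\}$ and $\Lambda^+=\{\eta=A_+y\}$ with $\Im A_+>0$; this preliminary step preserves the positive definiteness of $\Re q$. Applying the Bargmann transform $T$, which is unitary from $L^2(\real^n)$ onto $H_{\Phi_0}(\comp^n)$, and invoking the exact Egorov relation (\ref{eq19}), one conjugates $q^w(y,hD_y)$ into $\widetilde{q}^w(x,hD_x)$ with $\widetilde{q}=q\circ\kappa_T^{-1}$. Since $\kappa_T$ sends $\Lambda^-$ and $\Lambda^+$ to the coordinate fibers $\{x=0\}$ and $\{\xi=0\}$ respectively, and $q$ vanishes along each of $\Lambda^{\pm}$, the form $\widetilde{q}$ vanishes on both of these fibers, which forces $\widetilde{q}(x,\xi)=Mx\cdot\xi$ for some complex $n\times n$ matrix $M$; the spectral identification $\Spec(M)=\Spec(2F)\cap\{\Im\lambda>0\}$ is then precisely (\ref{eq21}).

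Next, I would conjugate by the unitary operator $u(x)\mapsto|\det C|\,u(Cx)$ from $H_{\Phi_0}(\comp^n)$ onto $H_{\Phi_1}(\comp^n)$, where $\Phi_1(x)=\Phi_0(Cx)$ and $C$ is chosen so that $C^{-1}MC$ is in Jordan normal form; at the symbol level this is the transformation $\kappa_C$ of (\ref{eq22}), and it replaces $Mx\cdot\xi$ by $(C^{-1}MC)x\cdot\xi$. The equivalence $\Phi_1(x)\sim|x|^2$ follows at once from $\Phi_0\sim|x|^2$ and the invertibility of $C$, and the description of the domain in the proposition reduces to the standard description of the natural domain of a quadratic operator on the Bargmann side.

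The last and most delicate point is the ellipticity estimate (\ref{eq24}); this is what I expect to be the main obstacle, although it ultimately reduces to linear algebra. The starting observation is that the positive definiteness of $\Re q$ on $\real^{2n}$ gives $\Re q(y,\eta)\sim|y|^2+|\eta|^2$. The composed complex linear canonical transformation $\kappa$ obtained by following the three unitarily implemented steps is a bijection of $\real^{2n}$ onto $\Lambda_{\Phi_1}$, and parametrizing the latter by its base point $x\in\comp^n$ turns $\kappa^{-1}$ into a bounded $\real$-linear isomorphism $\comp^n\to\real^{2n}$, $x\mapsto(y,\eta)$. Combined with $\Phi_1(x)\sim|x|^2$, which also implies $|(2/i)\partial_x\Phi_1(x)|\sim|x|$, this yields $|y|^2+|\eta|^2\sim|x|^2$, and (\ref{eq24}) follows from the identity $\Re\widetilde{q}(x,(2/i)\partial_x\Phi_1(x))=\Re q(\kappa^{-1}(x,(2/i)\partial_x\Phi_1(x)))\sim|x|^2$.
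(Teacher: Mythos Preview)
Your proposal is correct and follows essentially the same route as the paper: Proposition 2.1 is presented there as a summary of the preceding discussion in Section 2, and you have reproduced that discussion faithfully --- the real metaplectic normalization of $\Lambda^-$, the FBI--Bargmann transform $T$ with the exact Egorov property, and the dilation $\kappa_C$ bringing $M$ to Jordan form. Your explicit argument for the ellipticity (\ref{eq24}), pulling $\Re\widetilde q$ on $\Lambda_{\Phi_1}$ back to the positive definite form $\Re q$ on $\real^{2n}$ via the composed real--linear isomorphism, is a point the paper leaves implicit, and your verification is correct.
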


\medskip
\noindent
{\it Remark}. The normal form reduction described in Proposition 2.1 is close to the corresponding discussion of Section 3
in~\cite{Sjostrand74}. Here, for future computations, it will be convenient for us to work in the Bargmann space $H_{\Phi_1}(\comp^n)$.

\section{An elliptic estimate}
\setcounter{equation}{0}
Following the reduction of Proposition 2.1, here we shall be concerned with the quadratic operator $\widetilde{q}^w(x,hD_x)$, acting on
$H_{\Phi_1}(\comp^n)$.
The purpose of this section is to establish a suitable a priori estimate for holomorphic functions, vanishing to a high,
$h$-dependent, order at the origin, instrumental in the proof of Theorem 1.1. The starting point is the following observation, which
comes directly from Lemma 4.5 in~\cite{GeSj}, and whose proof we give for the convenience of the reader only.

\begin{lemma}
Let $u\in {\rm Hol}(\comp^n)$ and assume that $\partial^{\alpha}u(0)=0$, $\abs{\alpha}<N$. Assume that $0<C_0<C_1<\infty$. Then
\begeq
\label{eq3.1}
\norm{u}_{L^{\infty}(B(0, C_0))} \leq \left(N \frac{C_1}{C_1-C_0}\right)\left(\frac{C_0}{C_1}\right)^N \norm{u}_{L^{\infty}(B(0, C_1))}.
\endeq
Here $B(0,C_j)=\{x\in \comp^n;\, \abs{x}\leq C_j\}$, $j=0,1$.
\end{lemma}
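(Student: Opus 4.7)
The plan is to reduce the multivariable estimate to a one-variable Schwarz-lemma type bound, by slicing $\mathbb{C}^n$ along complex lines through the origin. This is natural because vanishing of $\partial^\alpha u(0)$ to order $N$ on the multivariable side translates cleanly into vanishing of order $\geq N$ at the origin for each such slice.

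Concretely, I would pick an arbitrary $x \in B(0,C_0)$ with $x \neq 0$, set $\omega = x/|x|$, and consider the one-variable holomorphic function
$$
f_\omega(\zeta) := u(\zeta\omega), \qquad \zeta \in \mathbb{C}, \quad |\zeta| \leq C_1.
$$
Expanding $u$ in its Taylor series at $0$ (homogeneous of degree $\geq N$ in each term, by hypothesis), one sees that $f_\omega$ vanishes to order $\geq N$ at $\zeta = 0$, so the function $g_\omega(\zeta) := f_\omega(\zeta)/\zeta^N$ extends holomorphically to the closed disc $|\zeta| \leq C_1$. By the maximum modulus principle applied to $g_\omega$,
$$
|g_\omega(\zeta)| \leq \max_{|\zeta|=C_1} \frac{|f_\omega(\zeta)|}{C_1^N} \leq \frac{\|u\|_{L^\infty(B(0,C_1))}}{C_1^N}.
$$
Evaluating at $\zeta = |x|$ gives $|u(x)| \leq (|x|/C_1)^N \|u\|_{L^\infty(B(0,C_1))} \leq (C_0/C_1)^N \|u\|_{L^\infty(B(0,C_1))}$, and taking the supremum over $x$ yields (\ref{eq3.1}), in fact without the prefactor $N C_1/(C_1-C_0)$.

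Alternatively, and closer in spirit to the form of the bound stated, one may decompose $u = \sum_{k\geq N} P_k$ into its homogeneous Taylor polynomials and use Cauchy's estimates along each radial slice to obtain $|P_k(x)| \leq (|x|/C_1)^k \|u\|_{L^\infty(B(0,C_1))}$; summing the resulting geometric series over $k \geq N$ produces the factor $C_1/(C_1-C_0)$ in front of $(C_0/C_1)^N$. The stated bound (\ref{eq3.1}) is then an immediate consequence; the additional factor of $N$ is simply a convenient, non-optimal estimate (e.g.\ $C_1/(C_1-C_0) \leq N\, C_1/(C_1-C_0)$). There is no real obstacle here — the argument is a direct invocation of the one-variable Schwarz lemma, and the only step worth making explicit is the slice-by-slice reduction that justifies applying it.
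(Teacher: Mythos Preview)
Your argument is correct and in fact yields the sharper bound $\norm{u}_{L^{\infty}(B(0,C_0))} \leq (C_0/C_1)^N \norm{u}_{L^{\infty}(B(0,C_1))}$, so the lemma follows a fortiori. Both your proof and the paper's reduce to one complex variable along the slice $\zeta \mapsto u(\zeta x/\abs{x})$, but the mechanisms differ: the paper writes the integral Taylor remainder
\[
u(x)=\int_0^1 \frac{(1-t)^{N-1}}{(N-1)!}\left(\frac{d}{dt}\right)^N u(tx)\,dt,
\]
bounds the $N$-th derivative via Cauchy's inequalities on discs of radius $C_1/C_0 - t$, and then estimates the resulting integral; this is where the factor $N\,C_1/(C_1-C_0)$ appears. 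Your first approach instead applies the maximum principle directly to $u(\zeta\omega)/\zeta^N$, which is the standard Schwarz lemma and avoids any derivative bound or integral estimate. What you gain is a cleaner argument and an optimal constant; what the paper's approach buys is nothing essential here, though the Taylor-remainder formulation is perhaps more robust if one later wants to perturb the vanishing hypothesis. Your second approach (summing Cauchy estimates on the homogeneous components $P_k$) is intermediate: it recovers the factor $C_1/(C_1-C_0)$ but not the $N$, and is also perfectly valid.
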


\begin{proof}
By Taylor's formula, we have
$$
u(x)  = \int_0^1 \frac{(1-t)^{N-1}}{(N-1)!} \left(\frac{d}{dt}\right)^N u(tx)\,dt.
$$
We may assume that $\abs{x}=C_0$, and apply Cauchy's inequalities, so that
$$
\abs{\left(\frac{d}{dt}\right)^N u(tx)} \leq \frac{C_0^N N!}{(C_1-C_0 t)^N}\norm{u}_{L^{\infty}(B(0,C_1))}.
$$
It suffices therefore to remark that the expression
$$
N \int_0^1 \frac{(1-t)^{N-1}}{(C_1/C_0-t)^N}\,dt
$$
does not exceed
$$
\left(\frac{N}{\frac{C_1}{C_0}-1}\right) \left(\frac{C_0}{C_1}\right)^{N-1}.
$$
\end{proof}

\bigskip
\noindent
Let $K>0$ be fixed and assume that $u\in H_{\Phi_1}(\comp^n)$ is such that $\partial^{\alpha}u(0)=0$, when $\abs{\alpha}<N$. Using Lemma 3.1, we write
\begin{multline}
\norm{u}^2_{H_{\Phi_1}(B(0,K))} \leq \norm{u}^2_{L^2(B(0,K))} \\
\leq {\cal O}_K(1)\norm{u}^2_{L^{\infty}(B(0,K))} \leq {\cal O}_K(1) N^2 e^{-2N} \norm{u}^2_{L^{\infty}(B(0,Ke))} \\
\leq {\cal O}_K(1) N^2 e^{-2N}\norm{u}^2_{L^2(B(0,(K+1)e))}
\leq {\cal O}_K(1) N^2 e^{-2N} e^{\frac{2}{h} C_1 (K+1)^2 e^2} \norm{u}^2_{H_{\Phi_1}}.
\end{multline}
Here in the last inequality we have used that $\Phi_1(x) \leq C_1 \abs{x}^2$, for some $C_1\geq 1$. It follows that
\begeq
\label{eq3.2}
\norm{u}_{H_{\Phi_1}(B(0,K))}\leq {\cal O}_K(1) e^{-1/2h} \norm{u}_{H_{\Phi_1}},
\endeq
provided that the integer $N$ satisfies
\begeq
\label{eq3.3}
N \geq \frac{2C_1 (K+1)^2 e^2 +1}{h}.
\endeq
In what follows, we shall let $N_0=N_0(K)\in \nat$, $N_0 \sim h^{-1}$, be the least integer which satisfies (\ref{eq3.3}).

\bigskip
\noindent
It is now easy to derive an a priori estimate for functions in $H_{\Phi_1}(\comp^n)$, which vanish to a high order at the origin.
Let $\chi\in C^{\infty}_0(\comp^n)$, $0\leq \chi \leq 1$, be such that ${\rm supp}\,(\chi)\subset \{x\in \comp^n; \abs{x}\leq K\}$, with
$\chi(x)=1$ for $\abs{x}\leq K/2$. If $u\in H_{\Phi_1}(\comp^n)$ is such that $(1+\abs{x}^2)u\in L^2_{\Phi_1}(\comp^n)$, we have the
quantization-multiplication formula~\cite{SjDuke}, valid for $z$ in a compact subset of $\comp$,
\begin{multline*}
\left((1-\chi)\left(\widetilde{q}^w(x,hD_x)-z\right)u,u\right)_{L^2_{\Phi_1}}
\\ =\int (1-\chi(x))\left(\widetilde{q}\left(x,\frac{2}{i}\frac{\partial \Phi_1}{\partial x}(x)\right)
-z\right) \abs{u(x)}^2 e^{-2\Phi_1(x)/h}\, L(dx) + {\cal O}(h)\norm{u}^2_{H_{\Phi_1}}.
\end{multline*}
The ellipticity property,
\begeq
\label{eq3.3.1}
\Re \widetilde{q}\left(x,\frac{2}{i}\frac{\partial \Phi_1}{\partial x}(x)\right)\geq \frac{\abs{x}^2}{C_0},\quad x\in \comp^n,
\endeq
for some $C_0>1$, implies that on the support of $1-\chi$, we have,
$$
\Re\left({\widetilde{q}\left(x,\frac{2}{i}\frac{\partial \Phi}{\partial x}(x)\right)-z}\right) \geq \frac{\abs{x}^2}{2C_0},
$$
provided that $\abs{z}\leq K^2/8C_0$. Restricting the attention to this range of $z$'s and using the Cauchy-Schwarz inequality, we obtain that
\begin{multline}
\label{eq3.4}
\int (1-\chi(x)) \abs{u(x)}^2 e^{-2\Phi_1(x)/h}\, L(dx) \\
\leq {\cal O}_K(1)\norm{\left(\widetilde{q}^w(x,hD_x)-z\right)u}_{H_{\Phi_1}}\,\norm{u}_{H_{\Phi_1}} + {\cal O}_K(h)\norm{u}^2_{H_{\Phi_1}}.
\end{multline}
If $u\in H_{\Phi_1}(\comp^n)$, $(1+\abs{x}^2) u\in L^2_{\Phi_1}(\comp^n)$, is such that $\partial^{\alpha}u(0)=0$, for all $\alpha\in \nat^n$ with
$\abs{\alpha} < N_0$, then an application of (\ref{eq3.2}) shows that the left hand side of (\ref{eq3.4}) is of the form
$$
\norm{u}^2_{H_{\Phi_1}} + {\cal O}_K(h^{\infty})\norm{u}_{H_{\Phi_1}}^2.
$$
We may summarize the discussion so far in the following proposition.

\begin{prop}
Let $K>0$ be fixed and assume that $u\in H_{\Phi_1}(\comp^n)$, $(1+\abs{x}^2)u\in L^2_{\Phi_1}(\comp^n)$, is such that $\partial^{\alpha}u(0)=0$,
$\abs{\alpha} < N_0$, where $N_0\sim h^{-1}$ is the least integer such that
$$
N_0 \geq \frac{2C_1 (K+1)^2 e^2 +1}{h}.
$$
Here $\Phi_1(x) \leq C_1 \abs{x}^2$, $C_1\geq 1$. Assume also that $\abs{z}\leq K^2/8C_0$, where $C_0>1$ is the ellipticity constant in
{\rm (\ref{eq3.3.1})}. Then we have the following a priori estimate, valid for all $h>0$ sufficiently small,
$$
\norm{u}_{H_{\Phi_1}}\leq {\cal O}(1)\norm{\left(\widetilde{q}^w(x,hD_x)-z \right)u}_{H_{\Phi_1}}.
$$
\end{prop}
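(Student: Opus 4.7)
The plan is to split the $H_{\Phi_1}$--norm of $u$ via a smooth cutoff and estimate the two pieces by different mechanisms. Fix $\chi\in C^\infty_0(\comp^n)$, $0\leq \chi\leq 1$, supported in $\{\abs{x}\leq K\}$ with $\chi=1$ on $\{\abs{x}\leq K/2\}$. On $\supp(1-\chi)$ the ellipticity (\ref{eq3.3.1}) will turn $\widetilde{q}^w-z$ into a positive multiplier, while on $\supp(\chi)$ the high-order vanishing of $u$ together with Lemma 3.1 will force $u$ to be exponentially small in $H_{\Phi_1}$.

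For the exterior piece I would invoke the quantization--multiplication formula of \cite{SjDuke} to write
$$
\left((1-\chi)(\widetilde{q}^w(x,hD_x)-z)u,u\right)_{L^2_{\Phi_1}} = \int (1-\chi(x))\left(\widetilde{q}\!\left(x,\frac{2}{i}\partial_x\Phi_1(x)\right)-z\right)\abs{u(x)}^2 e^{-2\Phi_1(x)/h}L(dx) + {\cal O}(h)\norm{u}^2_{H_{\Phi_1}}.
$$
The restriction $\abs{z}\leq K^2/(8C_0)$ combined with (\ref{eq3.3.1}) guarantees $\Re(\widetilde{q}(x,\frac{2}{i}\partial_x\Phi_1)-z)\geq \abs{x}^2/(2C_0)$ on $\supp(1-\chi)\subset\{\abs{x}\geq K/2\}$, and then Cauchy--Schwarz on the left yields
$$
\int (1-\chi(x))\abs{u(x)}^2 e^{-2\Phi_1(x)/h} L(dx) \leq {\cal O}_K(1)\norm{(\widetilde{q}^w(x,hD_x)-z)u}_{H_{\Phi_1}}\norm{u}_{H_{\Phi_1}} + {\cal O}_K(h)\norm{u}^2_{H_{\Phi_1}}.
$$

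For the interior piece, apply Lemma 3.1 with radii $C_0=K$, $C_1=Ke$, and order $N=N_0$, using $\partial^\alpha u(0)=0$ for $\abs{\alpha}<N_0$, to get $\norm{u}_{L^\infty(B(0,K))}\leq {\cal O}(N_0)e^{-N_0}\norm{u}_{L^\infty(B(0,Ke))}$. Passing from $L^\infty$ on $B(0,Ke)$ to $L^2_{\Phi_1}$ on the enlarged ball $B(0,(K+1)e)$ by the standard subharmonic mean-value estimate, and absorbing the weight via $\Phi_1(x)\leq C_1 \abs{x}^2$, introduces a loss factor $e^{2C_1(K+1)^2 e^2/h}$. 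The threshold $N_0\geq (2C_1(K+1)^2 e^2 +1)/h$ in the hypothesis is engineered exactly so that the gain $e^{-2N_0}$ beats this loss by an extra factor of $e^{-1/h}$, delivering (\ref{eq3.2}): $\norm{u}_{H_{\Phi_1}(B(0,K))}\leq {\cal O}_K(1) e^{-1/(2h)}\norm{u}_{H_{\Phi_1}}$.

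To conclude, since $\supp(\chi)\subset B(0,K)$, the left-hand side of the displayed inequality for the exterior piece equals $\norm{u}^2_{H_{\Phi_1}} - \int \chi\abs{u}^2 e^{-2\Phi_1/h}L(dx) = \norm{u}^2_{H_{\Phi_1}}\bigl(1+{\cal O}_K(e^{-1/h})\bigr)$. Substituting back gives
$$
\norm{u}^2_{H_{\Phi_1}} \leq {\cal O}_K(1)\norm{(\widetilde{q}^w(x,hD_x)-z)u}_{H_{\Phi_1}}\norm{u}_{H_{\Phi_1}} + {\cal O}_K(h)\norm{u}^2_{H_{\Phi_1}},
$$
and absorbing the last term into the left-hand side for $h$ sufficiently small, then dividing by $\norm{u}_{H_{\Phi_1}}$, produces the claim. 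The main technical obstacle is the interior estimate: the exponential loss $e^{2C_1(K+1)^2 e^2/h}$ paid to move from uniform norms to $H_{\Phi_1}$ on nested balls must be decisively dominated by the vanishing gain, which is why $N_0$ has to be taken proportional to $1/h$ with the precise constant shown --- a merely ``large $N$'' vanishing hypothesis would not produce the $e^{-1/(2h)}$ smallness required to kill the cutoff error.
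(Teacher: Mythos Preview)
Your proposal is correct and follows essentially the same route as the paper: the same cutoff $\chi$, the same quantization--multiplication formula from \cite{SjDuke} combined with (\ref{eq3.3.1}) on $\supp(1-\chi)$, and the same use of Lemma~3.1 with radii $K$ and $Ke$ together with the subharmonic mean-value step to obtain (\ref{eq3.2}) on $\supp(\chi)$. The only cosmetic difference is that you record the interior smallness as ${\cal O}_K(e^{-1/h})$ while the paper writes the weaker ${\cal O}_K(h^{\infty})$ before absorbing; both suffice.
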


\bigskip
\noindent
We shall finish this section by discussing norm estimates for the linear continuous projection operator
$$
\tau_N: H_{\Phi_1}(\comp^n)\rightarrow H_{\Phi_1}(\comp^n),
$$
given by
\begeq
\label{eq3.5}
\tau_N u(x) = \sum_{\abs{\alpha}<N} (\alpha!)^{-1} \left(\partial^{\alpha}u(0)\right) x^{\alpha}.
\endeq
As in Proposition 3.2, we shall be concerned with the case when $N\in \nat$ satisfies $N\sim h^{-1}$. The projection operator
$\tau_N$ is highly non-orthogonal --- nevertheless, using the strict convexity of the quadratic weight $\Phi_1$, establishing
an exponential upper bound on its norm will be quite straightforward, as well as sufficient for our purposes. In the
following, we shall use the fact that
\begeq
\label{eq3.6}
\frac{1}{C_1} \abs{x}^2 \leq \Phi_1(x) \leq C_1 \abs{x}^2,\quad C_1\geq 1.
\endeq
Notice also that $[\tau_N,\widetilde{q}^w(x,hD_x)]=0$.

\begin{prop}
Assume that $N\in \nat$ is such that $Nh \leq {\cal O}(1)$. There exists a constant $C>0$ such that
\begeq
\label{eq3.61}
\tau_N = C e^{C/h}: H_{\Phi_1}(\comp^n)\rightarrow H_{\Phi_1}(\comp^n).
\endeq
\end{prop}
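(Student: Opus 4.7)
The plan is to estimate $\norm{\tau_N u}_{H_{\Phi_1}}$ by applying the triangle inequality directly to the Taylor expansion \eqref{eq3.5} and then controlling separately the size of the Taylor coefficients and the $H_{\Phi_1}$--norms of the monomials $x^\alpha$. The two-sided bound \eqref{eq3.6} on the weight, combined with the hypothesis $Nh\leq {\cal O}(1)$, will make each of these pieces grow at most like $e^{{\cal O}(1)/h}$.

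First I would estimate the Taylor coefficients pointwise. For a fixed constant $R>0$, Cauchy's inequalities on the polydisc of polyradius $R$ centered at the origin give
$$
\frac{\abs{\partial^\alpha u(0)}}{\alpha!} \leq R^{-\abs{\alpha}}\sup_{\abs{x_j}\leq R,\, j=1,\ldots,n}\abs{u(x)}.
$$
To convert the right-hand side into a bound involving the $H_{\Phi_1}$--norm, one applies the sub-mean-value property of $\abs{u}^2$ on unit balls around each point of this polydisc, using the upper bound $\Phi_1\leq C_1\abs{x}^2$ from \eqref{eq3.6} to bound the weight from above. Fixing $R$ to be a constant, this yields the uniform estimate
$$
\frac{\abs{\partial^\alpha u(0)}}{\alpha!} \leq C e^{C/h}\norm{u}_{H_{\Phi_1}}.
$$

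Next I would bound $\norm{x^\alpha}_{H_{\Phi_1}}$ from above. Using the lower bound $\Phi_1\geq \abs{x}^2/C_1$ together with the pointwise inequality $\abs{x^\alpha}^2\leq \abs{x}^{2\abs{\alpha}}$, this reduces to an explicit Gaussian moment computable by radial integration,
$$
\norm{x^\alpha}_{H_{\Phi_1}}^2 \leq \int_{\comp^n} \abs{x}^{2\abs{\alpha}} e^{-2\abs{x}^2/(C_1 h)}\, L(dx) = c_n\,(\abs{\alpha}+n-1)!\left(\frac{C_1 h}{2}\right)^{\abs{\alpha}+n}.
$$
Applying Stirling's formula and using $\abs{\alpha}\leq N-1\leq {\cal O}(h^{-1})$, the right-hand side is dominated by a constant raised to the power $\abs{\alpha}+n = {\cal O}(h^{-1})$, hence by $e^{C/h}$.

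Combining the two estimates through the triangle inequality on \eqref{eq3.5} and summing over the ${\cal O}(N^n)$ multi-indices with $\abs{\alpha}<N$ yields the bound \eqref{eq3.61}, the polynomial prefactor in $h^{-1}$ being harmlessly absorbed into $e^{C/h}$. The main technical ingredient is the Stirling estimate for the monomial norms: this is where the hypothesis $Nh\leq {\cal O}(1)$ plays an essential role, for otherwise the factor $(h\abs{\alpha})^{\abs{\alpha}/2}$ appearing after Stirling would grow faster than $e^{{\cal O}(1)/h}$.
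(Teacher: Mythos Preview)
Your argument is correct and complete. The approach differs from the paper's, however. The paper first restricts attention to holomorphic polynomials (by density, invoking~\cite{Sjostrand74}), and then sandwiches $\Phi_1$ between the two radial weights $\Phi_\ell(x)=\abs{x}^2/C_1$ and $\Phi_u(x)=C_1\abs{x}^2$. Because monomials are orthogonal in any $H_\Phi$ with radial $\Phi$, one gets the exact identities $\norm{x^\alpha}^2_{H_{\Phi_\ell}}=(C_1h/2)^{n+\abs{\alpha}}\pi^n\alpha!$ and $\norm{x^\alpha}^2_{H_{\Phi_u}}=(h/2C_1)^{n+\abs{\alpha}}\pi^n\alpha!$, so that $\norm{\tau_N u}_{H_{\Phi_1}}^2/\norm{u}_{H_{\Phi_1}}^2$ is bounded coefficient-by-coefficient by $(C_1^2)^{n+\abs{\alpha}}\leq e^{{\cal O}(1)/h}$. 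No triangle inequality and no Stirling estimate are needed.

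Your route is more hands-on: Cauchy's inequalities plus the sub-mean-value property control the Taylor coefficients directly (so you need no density argument), and a crude Gaussian moment bounds each $\norm{x^\alpha}_{H_{\Phi_1}}$; Stirling then converts the factorial into $(h\abs{\alpha})^{\abs{\alpha}/2}\leq e^{{\cal O}(1)/h}$. The price is a looser constant from the triangle inequality over ${\cal O}(N^n)$ terms, but this is harmless at the level of $e^{{\cal O}(1)/h}$. One small point of presentation: to make the Cauchy step literally ``uniform'' in $\alpha$ you should take $R\geq 1$, so that $R^{-\abs{\alpha}}\leq 1$; otherwise you must also note that $R^{-\abs{\alpha}}\leq e^{{\cal O}(1)/h}$ since $\abs{\alpha}<N$.
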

\begin{proof}
Let us observe first that when deriving the bound (\ref{eq3.61}), it suffices to restrict the attention to the
space of holomorphic polynomials, which is dense in $H_{\Phi_1}(\comp^n)$. Indeed, the analysis of~\cite{Sjostrand74} tells
us that the linear span of the generalized eigenfunctions of the quadratic operator $q^w(x,hD_x)$ is dense in $L^2(\real^n)$,
which implies the density of the holomorphic polynomials in $H_{\Phi_1}(\comp^n)$. Let
\begeq
\label{eq3.611}
u(x) = \sum_{\abs{\alpha}\leq N_1} a_{\alpha} x^{\alpha},
\endeq
for some $N_1$, where we may assume that $N_1>N$. We have
$$
\tau_N u = \sum_{\abs{\alpha}< N} a_{\alpha} x^{\alpha},
$$
and therefore, using (\ref{eq3.6}), we see that
\begeq
\label{eq3.62}
\norm{\tau_N u}^2_{H_{\Phi_1}} \leq \norm{\tau_N u}^2_{H_{\Phi_{\ell}}},
\endeq
where $\Phi_{\ell}(x) = \abs{x}^2/C_1$. When computing the expression in the right hand side of (\ref{eq3.62}), we
notice that since $\Phi_{\ell}$ is radial, we have
$$
(x^{\alpha},x^{\beta})_{H_{\Phi_{\ell}}}=0,\quad \alpha\neq \beta,
$$
while
$$
(x^{\alpha},x^{\alpha})_{H_{\Phi_{\ell}}} = \prod_{j=1}^n \int \abs{x_j}^{2\alpha_j} e^{-2\abs{x_j}^2/C_1 h}\, L(dx_j),
$$
which is immediately seen to be equal to
$$
\left(\frac{C_1 h}{2}\right)^{n+\abs{\alpha}} \pi^n \alpha!.
$$
It follows that
\begeq
\label{eq3.7}
\norm{\tau_N u}_{H_{\Phi_1}}^2 \leq \sum_{\abs{\alpha}< N} \abs{a_{\alpha}}^2 \left(\frac{C_1 h}{2}\right)^{n+\abs{\alpha}} \pi^n \alpha!.
\endeq

\medskip
\noindent
On the other hand, (\ref{eq3.6}) gives also that
\begeq
\label{eq3.8}
\norm{u}^2_{H_{\Phi_1}}\geq \norm{u}^2_{H_{\Phi_u}},
\endeq
where $\Phi_u(x) = C_1 \abs{x}^2$, and arguing as above, it is straightforward to see that the right hand side of (\ref{eq3.8}) is
given by the expression
$$
\sum_{\abs{\alpha}\leq N_1} \abs{a_{\alpha}}^2 \left(\frac{h}{2C_1}\right)^{n+\abs{\alpha}} \pi^n \alpha!.
$$
We conclude that when $u\in H_{\Phi_1}(\comp^n)$ is a holomorphic polynomial of the form (\ref{eq3.611}), then
\begeq
\label{eq3.81}
\norm{u}^2_{H_{\Phi_1}} \geq \sum_{\abs{\alpha}< N} \abs{a_{\alpha}}^2 \left(\frac{h}{2C_1}\right)^{n+\abs{\alpha}} \pi^n \alpha!.
\endeq
Combining (\ref{eq3.7}), (\ref{eq3.81}), and recalling the fact that $Nh \leq {\cal O}(1)$, we obtain the result of the proposition.
\end{proof}

\section{The finite-dimensional analysis and end of the proof}
\setcounter{equation}{0}
Let us recall the projection operator $\tau_N$, introduced in (\ref{eq3.5}). In this section, we shall analyze the resolvent of the
quadratic operator $\widetilde{q}^w(x,hD_x)$, acting on the finite-dimensional space ${\rm Im}\,\tau_N$, thereby completing the proof of
Theorem 1.1. Here $N\sim h^{-1}$. When doing so, when $m=0,1,\ldots\,$, let us introduce the finite-dimensional subspace
$E_m\subset H_{\Phi_1}(\comp^n)$, defined as the linear span of the monomials $x^{\alpha}$, with $\abs{\alpha}=m$. We have,
$$
{\rm Im}\, \tau_N  = \bigoplus_{m=0}^{N-1}E_m.
$$
We may notice here that
\begeq
\label{eq4.1}
\nu_m:={\rm dim}\, E_m  = \frac{1}{(n-1)!}(m+1)\ldots\, (m+n-1),
\endeq
and notice also that each space $E_m$ is invariant under $\widetilde{q}^w(x,hD_x)$. We shall equip ${\rm Im}\, \tau_N$ with the basis
\begeq
\label{eq4.2}
\varphi_{\alpha}(x) := \left(\pi^n\alpha!\right)^{-1/2} h^{-n/2} (h^{-1/2}x)^{\alpha},\quad \abs{\alpha}<N,
\endeq
which will be particularly convenient in the following computations, since the normalized monomials $\varphi_{\alpha}$ form an
orthonormal basis in the weighted space $H_{\Phi}(\comp^n)$, where $\Phi(x) = (1/2)\abs{x}^2$. We have,
${\rm Im}\, \tau_N \subset H_{\Phi_1}(\comp^n)\cap H_{\Phi}(\comp^n)$, in view of the strict convexity of the weights.

\bigskip
\noindent
Let us first derive an upper bound on the norm of the inverse of the operator
$$
z-\widetilde{q}^w(x,hD_x): E_m \rightarrow E_m, \quad 0\leq m < N\sim h^{-1},
$$
assuming that $E_m$ has been equipped with the $H_{\Phi}$--norm. Let $\lambda_1,\ldots,\lambda_n$ be the eigenvalues of the Hamilton map $F$ of $q$
in the upper half-plane, repeated according to their algebraic multiplicity. According to Proposition 2.1, we then have
$$
\widetilde{q}^w(x,hD_x) = \widetilde{q}^w_D(x,hD_x) + \widetilde{q}^w_N(x,hD_x),
$$
where
\begeq
\label{eq4.21}
\widetilde{q}^w_D(x,hD_x) = \sum_{j=1}^n 2\lambda_j x_j hD_{x_j} + \frac{h}{i}\sum_{j=1}^n \lambda_j,
\endeq
is the diagonal part, while
\begeq
\label{eq4.22}
\widetilde{q}^w_N(x,hD_x) = \sum_{j=1}^{n-1} \gamma_{j} x_{j+1} hD_{x_{j}},\quad \gamma_{j}\in \{0,1\},
\endeq
is the nilpotent one. It is also easily seen that the operators $\widetilde{q}^w_D(x,hD_x)$ and $\widetilde{q}^w_N(x,hD_x)$ commute.
It will be important for us to have an estimate of the order of nilpotency of the operator $\widetilde{q}^w_N(x,hD_x)$ acting on the space $E_m$.

\begin{lemma}
Let $n\geq 2$, $m\geq 1$, and let $E_m(n)$ be the space of homogeneous polynomials of degree $m$ in the variables $x_1,x_2,\ldots\, x_n$. The operator
$$
N:=\sum_{j=1}^{n-1} x_{j+1}\partial_{x_j}: E_m(n) \rightarrow E_m(n)
$$
is nilpotent of order $m(n-1)+1$.
\end{lemma}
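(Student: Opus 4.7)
The plan is to grade the space $E_m(n)$ by a suitable weight and then use that $N$ is a derivation to compute the top nonvanishing power explicitly.

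First, to every monomial $x^\alpha = x_1^{\alpha_1}\cdots x_n^{\alpha_n}$ in $E_m(n)$ I associate the \emph{weight}
$$
w(\alpha) = \sum_{j=1}^n j\,\alpha_j.
$$
Each term $x_{j+1}\partial_{x_j}$ sends $x^\alpha$ to a monomial of weight $w(\alpha)+1$ (it decreases $\alpha_j$ by one and increases $\alpha_{j+1}$ by one). Hence $N$ raises the weight by exactly $1$ on monomials, and therefore on all of $E_m(n)$. Since for $\alpha \in \nat^n$ with $|\alpha|=m$ the weight $w(\alpha)$ satisfies $m \leq w(\alpha) \leq mn$, with the bounds attained uniquely by $x_1^m$ (weight $m$) and $x_n^m$ (weight $mn$), any element of the image of $N^{m(n-1)+1}$ would consist of monomials of weight $\geq m+m(n-1)+1 = mn+1$. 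No such monomial exists in $E_m(n)$, so $N^{m(n-1)+1}=0$ on $E_m(n)$.

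For the lower bound $N^{m(n-1)} \neq 0$, the decisive observation is that $N$ is a first-order differential operator, hence a derivation on the algebra $\comp[x_1,\dots,x_n]$. Consequently $\exp(tN)$ is an algebra homomorphism at the level of formal power series in $t$ with polynomial coefficients, and therefore
$$
e^{tN} x_1^m = \bigl(e^{tN} x_1\bigr)^m.
$$
Since $N x_j = x_{j+1}$ for $j<n$ and $Nx_n=0$, iteration gives $N^k x_1 = x_{k+1}$ for $0\leq k\leq n-1$ and $N^k x_1=0$ for $k\geq n$, so
$$
e^{tN} x_1 = \sum_{k=0}^{n-1} \frac{t^k}{k!}\, x_{k+1}.
$$
Raising to the $m$th power, the maximal power of $t$ that occurs is $t^{m(n-1)}$, attained only by choosing the $x_n$-term in every factor, which contributes $((n-1)!)^{-m} x_n^m$. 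Matching the coefficient of $t^{m(n-1)}$ on both sides of $e^{tN} x_1^m = \sum_k (t^k/k!)\, N^k x_1^m$ yields
$$
N^{m(n-1)} x_1^m = \frac{(m(n-1))!}{((n-1)!)^m}\, x_n^m,
$$
which is nonzero.

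Combined with the weight argument, this shows that the nilpotency order of $N$ on $E_m(n)$ equals $m(n-1)+1$, as claimed. There is no real obstacle in this strategy; the only thing to get right is the choice of grading that detects both extremal monomials $x_1^m$ and $x_n^m$, and then the derivation property does all the remaining work in a single formal identity.
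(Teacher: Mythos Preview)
Your proof is correct and follows the same route as the paper: the weight $w(\alpha)=\sum_j j\alpha_j$ is exactly the paper's function $S(\alpha)$, and the upper bound $N^{m(n-1)+1}=0$ is obtained identically. For the lower bound the paper simply asserts $N^{m(n-1)}x_1^m=Cx_n^m\neq 0$ without computing $C$, whereas your derivation-and-exponential argument supplies the explicit constant $C=(m(n-1))!/((n-1)!)^m$; this is a nice addition but not a different strategy.
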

\begin{proof}
When $\alpha=(\alpha_1,\ldots ,\alpha_n)$, $\abs{\alpha}=m$, let us write
$$
S(\alpha) = \sum_{j=1}^n j \alpha_j,
$$
and notice that $m\leq S(\alpha) \leq nm$. We have
$$
N x^{\alpha} = \sum_{\abs{\alpha'}=m,\, S(\alpha') = S(\alpha)+1} c_{\alpha'} x^{\alpha'},
$$
and similarly for powers $N^p x^{\alpha}$, where instead $S(\alpha') = S(\alpha) + p$. It follows that $N^{m(n-1)+1}x^{\alpha}$ must vanish,  as
$$
S(\alpha') = S(\alpha) + m(n-1) + 1 \geq mn+1
$$
is impossible. We also notice that $N^{m(n-1)}x_1^m =  C x_n^m\neq 0$, for some $C\neq 0$.
\end{proof}

\bigskip
\noindent
In what follows, we shall only use that the operator $\widetilde{q}^w_N(x,hD_x): E_m \rightarrow E_m$ is nilpotent of order ${\cal O}(m)$,
with the implicit constant depending on the dimension $n$ only.

\bigskip
\noindent
It is now straightforward to derive a bound on the norm of the inverse of the operator
$$
z-\widetilde{q}^w(x,hD_x): E_m \rightarrow E_m,
$$
when the space $E_m$ is equipped with the $H_{\Phi}$--norm. The matrix ${\cal D}(m)$ of the operator $\widetilde{q}^w_D(x,hD_x)$ with
respect to the basis $\varphi_{\alpha}$, $\abs{\alpha}=m$, is diagonal, with the eigenvalues of $\widetilde{q}^w(x,hD_x)$,
$$
\mu_{\alpha} = \frac{h}{i} \sum_{j=1}^n \lambda_j(2\alpha_j+1),\quad \abs{\alpha}=m,
$$
along the diagonal. On the other hand, using (\ref{eq4.2}), we compute
$$
x_{j+1}\partial_{x_j} \varphi_{\alpha} = \alpha_{j}^{1/2} (\alpha_{j+1}+1)^{1/2}
\varphi_{\alpha-e_{j}+e_{j+1}},\quad 1\leq j \leq n-1,
$$
where $\alpha=(\alpha_1,\ldots ,\alpha_n)$ and $e_{1}, \ldots ,e_{n}$ is the canonical basis in $\real^{n}$. It follows that
\begeq
\label{eq4.4}
\widetilde{q}^w_N(x,hD_x)\varphi_{\alpha} = \sum_{j=1}^{n-1} -ih \gamma_{j} \alpha_{j}^{1/2} (\alpha_{j+1}+1)^{1/2}\varphi_{\alpha-e_{j}+e_{j+1}},
\endeq
and hence the entries $({\cal N}(m)_{\alpha,\beta}) = ((\widetilde{q}^w_N(x,hD_x)\varphi_{\beta},\varphi_{\alpha}))$,
$\abs{\alpha}=\abs{\beta}=m$, of the matrix ${\cal N}(m): \comp^{\nu_m}\rightarrow \comp^{\nu_m}$ of
$\widetilde{q}^w_N(x,hD_x): E_m\rightarrow E_m$ with respect to the basis $\{\varphi_{\alpha}\}$, are bounded in modulus by
$$
h \alpha_{j}^{1/2}(\alpha_{j+1}+1)^{1/2}\leq h(m+1)\leq {\cal O}(1),
$$
since $\abs{\alpha}=m$ and $m$ does not exceed $N={\cal O}(h^{-1})$. It follows furthermore from (\ref{eq4.4}) that the matrix ${\cal N}(m)$
has no more than $n-1$ non-zero entries in any column, and a similar reasoning shows that each row of ${\cal N}(m)$ also has no more
than $n-1$ non-zero entries. Since we have just seen that the entries in ${\cal N}(m)$ are ${\cal O}(1)$, an application of
Schur's lemma shows that that the operator norm of ${\cal N}(m)$ on $\comp^{\nu_m}$ does not exceed
$$
\left(\sup_{\beta}\sum_{\alpha} \abs{{\cal N}(m)_{\alpha,\beta}}\right)^{1/2} \left(\sup_{\alpha}
\sum_{\beta} \abs{{\cal N}(m)_{\alpha,\beta}}\right)^{1/2} \leq {\cal O}(1).
$$

\bigskip
\noindent
Now the inverse of the $\nu_m\times \nu_m$ matrix
$$
z-{\cal D}(m)-{\cal N}(m): \comp^{\nu_m}\rightarrow \comp^{\nu_m},
$$
is given by
\begeq
\label{eq4.5}
\left(z-{\cal D}(m)\right)^{-1} \sum_{j=0}^{\infty} \left((z-{\cal D}(m))^{-1} {\cal N}(m)\right)^j,
\endeq
and according to Lemma 4.1 and the fact that $[\widetilde{q}^w_D(x,hD_x),\widetilde{q}^w_N(x,hD_x)]=0$, we know that the Neumann
series in (\ref{eq4.5}) is finite, containing at most ${\cal O}(m)$ terms. It follows that
\begeq
\label{eq4.6}
\left(z-{\cal D}(m)-{\cal N}(m)\right)^{-1} = \frac{\exp({\cal O}(m))}{d(z,\sigma_m)^{{\cal O}(m)}}: \comp^{\nu_m}\rightarrow \comp^{\nu_m},
\endeq
where $d(z,\sigma_m) = \inf_{\abs{\alpha}=m} \abs{z-\mu_{\alpha}}$ is the distance from $z\in \comp$ to the set of eigenvalues
$\{\mu_{\alpha}\}$ of $\widetilde{q}^w(x,hD_x)$, restricted to $E_m$.

\bigskip
\noindent
Using the fact that ${\rm Im}\, \tau_N$ is the orthogonal direct sum of the spaces $E_m$, $0\leq m \leq N-1$, we may summarize the discussion
so far in the following result.
\begin{prop}
Assume that $N\in \nat$ is such that $Nh \leq {\cal O}(1)$, and let us equip the finite-dimensional space
${\rm Im}\, \tau_N\subset H_{\Phi_1}(\comp^n)\cap H_{\Phi}(\comp^n)$ with the $H_{\Phi}$--norm, where $\Phi(x) = (1/2)\abs{x}^2$.
Assume that $z\in \comp$ satisfies ${\rm dist}\,(z,{\rm Spec}(\widetilde{q}^w(x,hD_x)))\geq h^L/C$, for some $C>0$, $L\geq 1$. Then we have
\begeq
\label{eq4.61}
\left(z-\widetilde{q}^w(x,hD_x)\right)^{-1}  = {\cal O}(1) \exp\left({\cal O}(1)h^{-1}\log \frac{1}{h}\right):
{\rm Im}\, \tau_N \rightarrow {\rm Im\, \tau}_N.
\endeq
Assuming that ${\rm dist}\,(z,{\rm Spec}(\widetilde{q}^w(x,hD_x)))\geq 1/C$, the bound {\rm (\ref{eq4.61})} improves to the following,
\begeq
\label{eq4.61.1}
\left(z-\widetilde{q}^w(x,hD_x)\right)^{-1}  = {\cal O}(1) \exp\left({\cal O}(1)h^{-1}\right): {\rm Im}\, \tau_N \rightarrow {\rm Im\, \tau}_N.
\endeq
\end{prop}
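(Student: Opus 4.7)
The plan is to deduce both estimates directly from the single-block bound (\ref{eq4.6}), combined with the observation that ${\rm Im}\,\tau_N$ is an orthogonal direct sum of the subspaces $E_m$ with respect to the $H_\Phi$--norm. First I would note that since $\Phi(x)=(1/2)\abs{x}^2$ is radial, the normalized monomials $\varphi_\alpha$ in (\ref{eq4.2}) form an orthonormal system in $H_\Phi(\comp^n)$, so the blocks $E_m = \mathrm{span}\{\varphi_\alpha : \abs{\alpha}=m\}$ are mutually $H_\Phi$--orthogonal. By (\ref{eq4.21})--(\ref{eq4.22}) each $E_m$ is invariant under $\widetilde{q}^w(x,hD_x)$, and therefore also under its resolvent. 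Consequently, as an operator on the orthogonal direct sum $\bigoplus_{m=0}^{N-1}E_m$, the resolvent has operator norm
$$
\norm{(z-\widetilde{q}^w(x,hD_x))^{-1}}_{{\rm Im}\,\tau_N} = \max_{0\leq m<N}\norm{(z-\widetilde{q}^w(x,hD_x))^{-1}\vert_{E_m}},
$$
so it suffices to bound each block separately and take the maximum.

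For each fixed $m$ with $0\leq m<N$, the bound (\ref{eq4.6}) applied to the matrix representation with respect to $\{\varphi_\alpha\}_{\abs{\alpha}=m}$ gives
$$
\norm{(z-\widetilde{q}^w(x,hD_x))^{-1}\vert_{E_m}} \leq \frac{\exp({\cal O}(m))}{d(z,\sigma_m)^{{\cal O}(m)}}.
$$
Since $m<N\leq {\cal O}(h^{-1})$, the numerator is already controlled by $\exp({\cal O}(h^{-1}))$. For the denominator I would use that $\sigma_m\subset \mathrm{Spec}(\widetilde{q}^w(x,hD_x))$, so $d(z,\sigma_m)\geq \mathrm{dist}(z,\mathrm{Spec}(\widetilde{q}^w(x,hD_x)))$, and the hypothesis of the proposition then bounds $d(z,\sigma_m)$ from below by $h^L/C$ (respectively, $1/C$).

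Under the hypothesis $\mathrm{dist}(z,\mathrm{Spec}(\widetilde{q}^w(x,hD_x)))\geq h^L/C$, this yields
$$
d(z,\sigma_m)^{-{\cal O}(m)} \leq (h^L/C)^{-{\cal O}(h^{-1})} \leq \exp\!\left({\cal O}(1)\,h^{-1}\log\frac{1}{h}\right),
$$
and combining with the numerator proves (\ref{eq4.61}). Under the stronger hypothesis $\mathrm{dist}(z,\mathrm{Spec}(\widetilde{q}^w(x,hD_x)))\geq 1/C$, we instead have $d(z,\sigma_m)^{-{\cal O}(m)}\leq C^{{\cal O}(h^{-1})}=\exp({\cal O}(h^{-1}))$, which yields (\ref{eq4.61.1}). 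The proof is essentially a bookkeeping exercise once (\ref{eq4.6}) and the orthogonal block structure are in hand; there is no single difficult step, but one must take the maximum over blocks rather than a sum, which is precisely what the $H_\Phi$--orthogonality permits, and one must verify that the exponents ${\cal O}(m)$ from the nilpotent order in Lemma 4.1 do not exceed ${\cal O}(h^{-1})$ uniformly, which follows from the assumption $Nh\leq {\cal O}(1)$.
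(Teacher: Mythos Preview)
Your proposal is correct and follows essentially the same argument as the paper: the discussion leading up to the proposition establishes the block bound (\ref{eq4.6}) on each $E_m$, and the paper then invokes precisely the $H_\Phi$--orthogonal direct sum ${\rm Im}\,\tau_N=\bigoplus_{m=0}^{N-1}E_m$ to pass to the full space, which is exactly your reduction to a maximum over blocks. Your treatment of the two distance hypotheses, bounding $d(z,\sigma_m)^{-{\cal O}(m)}$ by $\exp({\cal O}(h^{-1})\log(1/h))$ and $\exp({\cal O}(h^{-1}))$ respectively, is the intended computation.
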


\bigskip
\noindent
{\it Remark}. Assume that the quadratic form $q$ is such that the nilpotent part in the Jordan decomposition of the Hamilton map $F$ is trivial.
The quadratic operator $\widetilde{q}^w(x,hD_x)$ acting on $H_{\Phi}(\comp^n)$ is then normal, and therefore, the estimate (\ref{eq4.61})
improves to the following,
$$
\norm{\left(z-\widetilde{q}^w(x,hD_x)\right)^{-1}}_{{\cal L}({\rm Im}\, \tau_N,{\rm Im}\, \tau_N)} \leq
\frac{1}{{\rm dist}\, (z,{\rm Spec}\, (q^w(x,hD_x)))}.
$$

\bigskip
\noindent
{\it Example}. Let $n=2$ and let us consider the semiclassical Weyl quantization of the elliptic quadratic form
$$
\widetilde{q}(x,\xi) = 2 \lambda \sum_{j=1}^2 x_j \xi_j + x_2 \xi_1, \quad \lambda = \frac{i}{2},
$$
acting on $H_{\Phi}(\comp^2)$. The eigenvalues of $\widetilde{q}^w(x,hD_x)$ are of the form $\mu_{\alpha} = h(\abs{\alpha}+1)$, $\abs{\alpha}\geq 0$,
and writing
$$
\widetilde{q}^w_D(x,hD_x) = 2\lambda\sum_{j=1}^2 x_j hD_{x_j} + \frac{2\lambda h}{i},\quad \widetilde{q}^w_N(x,hD_x) = x_2 hD_{x_1},
$$
we have
$$
\widetilde{q}^w_D(x,hD_x)\varphi_{\alpha} = \mu_{\alpha} \varphi_{\alpha},
$$
and
\begeq
\label{eq4.62}
\widetilde{q}^w_N(x,hD_x) \varphi_{\alpha} = -ih \left(\alpha_1 (\alpha_2+1)\right)^{1/2} \varphi_{\alpha-e_1+e_2}.
\endeq
Here $\varphi_{\alpha}$ have been introduced in (\ref{eq4.2}).

\medskip
\noindent
Let $\abs{\alpha}=m$, and let us write, following (\ref{eq4.5}),
\begeq
\label{eq4.7}
\left(\widetilde{q}^w(x,hD_x)-z\right)^{-1} \varphi_{\alpha} =
(\mu_{\alpha}-z)^{-1} \sum_{j=0}^m (\mu_{\alpha}-z)^{-j} \left(\widetilde{q}_N^w(x,hD_x)\right)^j \varphi_{\alpha}.
\endeq
It is then natural to take $\alpha=(m,0)$, and using (\ref{eq4.62}), a straightforward computation shows that, for $0\leq j \leq m$,
$$
\left(\widetilde{q}_N^w(x,hD_x)\right)^j \varphi_{(m,0)} = (-ih)^j \sqrt{\frac{j!m!}{(m-j)!}}\varphi_{(m-j,j)}.
$$
Let $z=1$ and take $m=h^{-1}\in \nat$ so that $\mu_{\alpha}-z=h$. By Parseval's formula,
\begeq
\label{eq4.8}
\norm{\left(\widetilde{q}^w(x,hD_x)-z\right)^{-1} \varphi_{(m,0)}}_{H_{\Phi}}^2  = \sum_{j=0}^m h^{-2} h^{-2j}h^{2j} \frac{j!m!}{(m-j)!},
\endeq
and the right hand side can be estimated from below simply by discarding all terms except when $j=m$. An application of Stirling's formula shows
that,
$$
\norm{\left(\widetilde{q}^w(x,hD_x)-z\right)^{-1} \varphi_{(m,0)}}_{H_{\Phi}} \geq m! \geq \exp\left(\frac{1}{2h}\log \frac{1}{h}\right),
$$
for all $h>0$ sufficiently small, and therefore, we see that the result of Proposition 4.2 cannot be improved. Let us finally
notice that, as can be checked directly, the quadratic operator $\widetilde{q}^w(x,hD_x)$ acting on $H_{\Phi}(\comp^2)$ is
unitarily equivalent, via an FBI-Bargmann transform, to the quadratic operator
$$
q(x,hD_x): L^2(\real^n)\rightarrow L^2(\real^n),
$$
of the form
$$
q(x,hD_x) = q_0(x,hD_x) - \frac{i}{2} a_2^* a_1,
$$
where
$$
q_0(x,hD_x) = -\frac{h^2}{2}\Delta + \frac{x^2}{2} = \frac{1}{2}\left(a_1^* a_1 + a_2^* a_2\right) + h,
$$
is the semiclassical harmonic oscillator, while
$$
a_j^*  = x_j - h\partial_{x_j},\quad a_j = x_j + h\partial_{x_j},\quad j=1,2,
$$
are the creation and annihilation operators, respectively. See also~\cite{CGrSj}.

\bigskip
\noindent
We shall now complete the proof of Theorem 1.1 in a straightforward manner, combining our earlier computations and estimates.
Elementary considerations, analogous to those used in the proof of Proposition 3.3, show that for some constant $C>0$, we have,
when $u\in {\rm Im}\, \tau_N$,
\begeq
\label{eq4.9}
\norm{u}_{H_{\Phi_1}}\leq C e^{C/h} \norm{u}_{H_{\Phi}},\quad \norm{u}_{H_{\Phi}}\leq C e^{C/h} \norm{u}_{H_{\Phi_1}}.
\endeq
It follows therefore that the result of Proposition 4.2,
\begeq
\label{eq4.10}
\left(z-\widetilde{q}^w(x,hD_x)\right)^{-1}  = {\cal O}(1) \exp\left({\cal O}(1)h^{-1}\log \frac{1}{h}\right): {\rm Im}\,
\tau_N \rightarrow {\rm Im\, \tau}_N,
\endeq
holds also when the space ${\rm Im}\, \tau_N\subset H_{\Phi_1}(\comp^n)\cap H_{\Phi}(\comp^n)$ is equipped with the $H_{\Phi_1}$-norm,
at the expense of an ${\cal O}(1)$--loss in the exponent. The same conclusion holds for the bound (\ref{eq4.61.1}).

\bigskip
\noindent
Let $\Omega \subset \subset \comp$ and assume that $z\in \Omega \subset \subset \comp$ is such that
${\rm dist}\, (z,{\rm Spec}(\widetilde{q}^w(x,hD_x)))\geq h^L/C$, for some $L\geq 1$ and $C>0$ fixed. Then according to Proposition 3.2,
there exists $N_0 \in \nat$, $N_0\sim h^{-1}$, such that if $u\in H_{\Phi_1}(\comp^n)$, is such that
$(1+\abs{x}^2)u\in L^2_{\Phi_1}(\comp^n)$, then, using that $[\widetilde{q}^w(x,hD_x),\tau_{N_0}]=0$, we get, for all $h>0$ small enough,
\begin{multline}
\label{eq4.11}
\norm{\left(1-\tau_{N_0}\right)u}_{H_{\Phi_1}} \leq {\cal O}(1) \norm{\left(\widetilde{q}^w(x,hD_x)-z\right)\left(1-\tau_{N_0}\right)u}_{H_{\Phi_1}}
\\ \leq {\cal O}(1) \exp \left({\cal O}(1) h^{-1}\right)\norm{\left(\widetilde{q}^w(x,hD_x)-z\right)u}_{H_{\Phi_1}}.
\end{multline}
Here we have also used Proposition 3.3. On the other hand, the bound (\ref{eq4.10}) and Proposition 3.3 show that
\begin{multline}
\label{eq4.12}
\norm{\tau_{N_0}u}_{H_{\Phi_1}} \leq {\cal O}(1) \exp\left({\cal O}(1)h^{-1}\log \frac{1}{h}\right)
\norm{\tau_{N_0}\left(\widetilde{q}^w(x,hD_x)-z\right)u}_{H_{\Phi_1}}\\
\leq {\cal O}(1)\exp\left({\cal O}(1)h^{-1}\log \frac{1}{h}\right) \norm{\left(\widetilde{q}^w(x,hD_x)-z\right)u}_{H_{\Phi_1}}.
\end{multline}
Combining (\ref{eq4.11}) and (\ref{eq4.12}), we obtain the bound (\ref{est2}). The estimate (\ref{est1}) follows in a similar way, and
hence, the proof of Theorem 1.1 is complete.

\end{document}